\documentclass[11pt]{article}
\usepackage[margin=1in]{geometry}
\usepackage{amsmath}
\usepackage{amssymb}
\usepackage{amsthm}
\usepackage{graphicx}
\usepackage{tikz-cd}
\usepackage{multicol}
\usepackage{setspace}
\usepackage[utf8]{inputenc}
\usepackage[backend=biber,style=authoryear,sorting=nyt,language=english]{biblatex}
\newtheorem{teo}{Theorem}
\newtheorem{obs}{Remark}
\newtheorem{defi}{Definition}

\title{An Algebraic Structure for the Central Mexican Ritual Calendar}
\author{Ramiro Carrillo Catal\''an \thanks{Dedicated to Professor Isa\''ias Aldaz Hern\''andez (\textit{Tsyay}).}\ \small{Universidad Pedag\''ogica Nacional Oaxaca -- Secihti.}\ \small{\texttt{rcarrillo@g.upn.mx}}}
\date{March 12, 2026\ \small{\textit{Ce Cipactli, Ce Tochtli}}}
\begin{document}
\maketitle
  \begin{abstract}
To keep track of time, Mesoamerican peoples made use of two calendars: the 260-day ritual calendar and the 365-day solar calendar. The structure of the 260-day ritual calendar system, the \textit{Tonalpohualli} (the count of days in the Nahuatl language), has intrigued and attracted scholarly interest from the colonial period \parencite{duran1880,veytia1836} to the present day. Multiple efforts have been made to interpret and understand its configuration \parencite{caso1967,milbrath2013,seler1979} and, in particular, its mathematical structure. In \textcite{pinagarza1999} a first arithmetic description is proposed, and \textcite[p.~46]{dehouve2023} concludes that this calendrical system possesses a closed structure (in the mathematical sense) upon studying the structural relations among the positions of this system. The present work offers, through an \textit{etic} analysis, a description of an algebraic structure that models the Central Mexican calendrical system. This structure confirms and complements the findings of \parencite{dehouve2023,pinagarza1999} by showing that the 260-day ritual calendar, the \textit{Tonalpohualli}, can be described using the cyclic group $\mathbb{Z}_{13}\oplus\mathbb{Z}_{20}$, and that it is possible to determine the relationship between the day number and the corresponding day name through an action of this group. In a similar fashion, the twenty \textit{trecenas} and the thirteen \textit{veintenas} of the ritual calendar are described algebraically as orbits of two transformations. Also, using transformations of the group $\mathbb{Z}_{13}\oplus\mathbb{Z}_{20}$, the grouping of four days (tetrads) is described both for all 260 days and for the first day of each \textit{trecena}. Finally, the relationship between these groupings and the subgroup generated by the twenty \textit{trecenas} of the \textit{Tonalpohualli}, expressed as permutations, is established. These descriptions may prove useful for the interpretation of codices.
    \end{abstract}

\noindent \textbf{Keywords:} \textit{Tonalpohualli}, modular arithmetic, cyclic groups, group actions.\\

\section{The Central Mexican Ritual Calendar}

To keep track of time, Mesoamerican peoples made use of two calendars: the 260-day ritual calendar and the 365-day solar calendar. There is evidence that the use of these calendrical systems was widespread throughout Mesoamerica at the time of the Spanish Conquest \parencite{milbrath2013,rodriguezcano2017,rojasmartinezgracida2022}. In the 260-day ritual calendar system, known in Nahuatl as the \textit{Tonalpohualli} (the count of days), a system of twenty calendrical signs combined with numerals from 1 to 13 was used to assign names to the days \parencite{caso1967}. The name of each day was determined by two components: a numeral and a calendrical sign (see Table~1, appended). The $260 = 13 \times 20$ different combinations are obtained by cyclically pairing the 13 integers with the 20 signs, which repeat in periods of 260 days. Thus, the sequence of day names proceeds as follows: 1-Crocodile, 2-Wind, 3-House, \ldots, 13-Reed, 1-Jaguar, 2-Eagle, \ldots, 7-Flower, 8-Crocodile, \ldots, 13-Flower (see Table~2, appended). The use of this calendrical system can be observed in the 10 surviving pre-Columbian codices: both in the 5 Mixtec codices and in the 5 codices of the so-called Borgia group\footnote{The representation of the Tonalpohualli in codices and books is usually termed \textit{Tonal\'{a}matl} (paper of the days).}. For example, in the first 8 plates of the Borgia Codex, the 260 days of the \textit{Tonalpohualli} can be identified with the 260 signs representing twenty thirteen-day periods (\textit{trecenas}), arranged in 4 groups of 65 days, each ordered as an array of 5 rows and 13 columns \parencite{andersjansen1993,dehouve2023,diazrodgersbyland1993,milbrath2013}.

In the codices \textit{Tonindeye} (Zouche-Nuttall) and \textit{Yuta Tnoho} (Vindobonensis), numerous dates and names of persons represented through this system appear frequently \parencite{andersjansenperezjimenez1992a,andersjansenperezjimenez1992b,nuttallmiller1975}. On the other hand, the 365-day system known as the \textit{Xiuhpohualli} consisted of 18 groups of 20 days each---called \textit{veintenas} by the Spaniards---and five additional days called \textit{Nemontemi} \parencite{milbrath2013}. Each \textit{veintena} corresponded to a specific ritual celebration of 20 days' duration, and the additional days were regarded as days \textit{that complete what has been lived}.

The recording of time was accomplished by considering simultaneously the 260-day ritual calendar and the 365-day annual cycle, both incorporated into a period of 52 years corresponding to 73 cycles of the \textit{Tonalpohualli} ($73 \times 260 = 52 \times 365$). This period is known as the \textit{Xiuhmolpilli} (binding of years). Under this system, a day recurs with the same name after 18,980 days, that is, after 52 years, which is the least common multiple of 260 and 365. The names of each year were given by combinations of an integer from 1 to 13 and a sign called a year-bearer. Of the twenty calendrical signs, only four were used as year-bearers: Reed, Flint, House, and Rabbit \parencite{caso1967,dehouve2023,milbrath2013}. There exist early colonial books and codices that describe this manner of organizing temporal periods, such as the Chronological Tables and the Calendrical Wheel No.~5 by Veytia, which shows the year divided into \textit{veintenas} \parencite{veytia1836}, or the temporal wheel of Dur\'{a}n, where the \textit{Xiuhmolpilli} is represented in spiral form, organizing the 52 years into four oriented groups \parencite{duran1880}.

In what follows, we provide an algebraic description of the ritual calendrical system that permits us, from an \textit{etic} perspective, to appreciate the complexity of the Mesoamerican system of timekeeping. The study of the relationship between the ritual calendar \textit{Tonalpohualli} and the 365-day solar calendar \textit{Xiuhmolpilli} will be addressed in a separate work.

\section{The \textit{Tonalpohualli} as a Cyclic Group}

As mentioned above, the name of each of the 260 days of the \textit{Tonalpohualli} is formed by the $13 \times 20$ different combinations obtained by cyclically pairing the 13 integers with the 20 calendrical signs. We note that the integers from 1 to 13 repeat throughout the combinations with each of the 20 signs and, conversely, the 20 signs repeat as they are matched with the 13 numerals (see Table~2). Accordingly, these sets can be modeled as the additive group of integers modulo 13 and modulo 20, respectively. Thus, the name of each day can be represented as an ordered pair, and since these names repeat in cycles of 260 days, the complete calendrical system can be modeled as the direct sum of those groups.\\

\begin{obs}
The \textit{Tonalpohualli} can be modeled as the group $\mathbb{Z}_{13}\oplus\mathbb{Z}_{20}$.
\end{obs}

To see this, let us first identify the set of the 13 numerals with the set of integers from 0 to 12, and the set of calendrical signs with the integers from 0 to 19, where the last sign, Flower, corresponds to 0. Then
$\Upsilon=\{ (q,r) \mid a \in \{0, \ldots, 12\},\, b\in \{0, \ldots, 19\} \}$ with the operation given by:
$$(q_{1},r_{1})\oplus (q_{2},r_{2})= (q_{1}+q_{2} \bmod{13}, \quad r_{1}+r_{2} \bmod{20})$$
coincides exactly with the definition of the group $\mathbb{Z}_{13}\oplus\mathbb{Z}_{20}$. We observe that since the orders 13 and 20 of the component groups are relatively prime, the direct sum $\mathbb{Z}_{13}\oplus\mathbb{Z}_{20}$ is a cyclic group of order 260 with generator $(1,1)$.\\

\subsection{The Number and Name of the Days}

We will now show how the above model allows us, given the number of a day from 1 to 260, to determine the numeral and the calendrical sign that form its name in the ritual calendar, and conversely, given the day name in the ritual calendar, to determine the corresponding day number. To this end, we consider $x$ as an element of the set $\{1,2,\ldots,260\}$ and $(q,r)$ as an element of $\mathbb{Z}_{13}\oplus\mathbb{Z}_{20}$. However, since days in a calendrical system repeat in cycles, in what follows we will consider $x\in \mathbb{Z}_{260}$; that is, $x$ represents the day number between 0 and 259, and $(q, r)$ represents its calendrical name.

First, given a value of $x$, we seek to determine the values of $q$ and $r$. In other words, we seek to establish the correspondence:
$$\ell:\mathbb{Z}_{260} \longmapsto \mathbb{Z}_{13}\oplus\mathbb{Z}_{20}$$
where each $x$ corresponds to some $(q,r)$. Since $q \in \mathbb{Z}_{13}$ and $r \in \mathbb{Z}_{20}$, we must have $x\equiv q \mod 13$ and $x\equiv r \mod 20$. This is equivalent to solving $x= 13k+q$ and $x= 20m+r$ for integers $k$ and $m$. Thus the map $\ell$ is defined by
$$\ell(x):= ( x\equiv q \bmod 13,\; x\equiv r \bmod 20).$$

\textbf{Example:} Consider day number 197 of 260. To determine the numeral and calendrical sign forming its name, we find $q$ and $r$ satisfying $197 \equiv q \bmod 13$ and $197 \equiv r \bmod 20$. Since $13(15)= 195$, we obtain $q = 2$; and since $197 = 20(9)+17$, we obtain $r=17$. Thus the name assigned to day 197 is $(2, 17)$, or (2, Movement).\\

Now suppose we know the day name in the ritual calendar and wish to determine the corresponding day number. In this case, the values of $q$ and $r$ forming the element $(q,r)$ of $\mathbb{Z}_{13}\oplus\mathbb{Z}_{20}$ are given, and we need to determine $x\in\mathbb{Z}_{260}$. In other words, we seek to establish the correspondence:
$$\imath:\mathbb{Z}_{13}\oplus\mathbb{Z}_{20} \longmapsto \mathbb{Z}_{260}$$
that assigns to the name of each day its corresponding day number. Under this correspondence, day (1, Crocodile) or (1,1), being the first day of the ritual calendar, should correspond to day 1; day (2, Wind) or (2,2) to day 2; day (7, Flower) or (7,0) to day 20; and day (8, Crocodile) or (8,1) to day 21. The last day of the calendar, day 260, must correspond to (13, Flower) or (13,0).\\

Given a pair $(q,r)$, we must simultaneously solve the congruences $x\equiv q \mod 13$ and $x\equiv r \mod 20$. This is a classical problem solvable by the Chinese Remainder Theorem, which establishes that a unique solution modulo the product of the moduli exists when the moduli are relatively prime---which is precisely the case here, since the moduli are 13 and 20. Thus $\imath((q,r))=x$, where $x$ is the simultaneous solution of $x\equiv q \mod 13$ and $x\equiv r \mod 20$, is well-defined.

From $x\equiv q \mod{13}$ we write $x=13k+q$ for some integer $k$. Substituting into the second congruence gives $13k+q\equiv r \mod 20$, equivalently $13k\equiv (r-q) \mod 20$, so we must solve for $k$. To determine $k$, we need an integer $y$ such that $13y \equiv 1 \mod 20$. Testing multiples of 13 shows that $13\times 17=221\equiv 1\pmod{20}$, so the inverse of 13 modulo 20 is 17. Thus we solve $13k\equiv (r-q) \mod 20$ by multiplying both sides by 17: $17(13k)\equiv 17(r-q) \bmod 20$.
Since $13\times 17\equiv 1\mod 20$, it follows that $k\equiv 17(r-q) \mod 20$, so $x=13k+q$ with $k\equiv 17(r-q) \bmod 20$.

On the other hand, the Chinese Remainder Theorem gives the unique solution explicitly as:
$$x \equiv 20qy_{1} + 13ry_{2} \bmod 260$$
where $y_1$ is the inverse of 20 modulo 13 (i.e., $20y_1 \equiv 1 \bmod 13$) and $y_2$ is the inverse of 13 modulo 20 (i.e., $13y_{2} \equiv 1 \bmod 20$). As established, $y_2=17$; and $y_{1}=2$ since $40=13(3)+1$. Therefore:
$$x \equiv 40q + 221r\bmod 260.$$
Since $221=260-39$, we have $221r\equiv -39r \bmod 260$, so $x \equiv 40q -39r\bmod 260$.

We conclude that given $(q,r)\in\mathbb{Z}_{13}\oplus\mathbb{Z}_{20}$, its image $\imath((q,r))=x$ is given by
$$x \equiv (40q -39r) \bmod 260.$$
One may verify that $\imath((1,1))=1$, $\imath((2,2))=2$, $\imath((7,0))=20$, $\imath((8,1))=21$, and $\imath((13,0))=0$, as required.\\

\newpage
\textbf{Example:} Consider the day (4, Deer) or (4,7). To find its day number, we solve simultaneously $x\equiv 4 \mod{13}$ and $x\equiv 7 \mod{20}$. Writing $x=13k+4$ and substituting: $13k+4\equiv 7 \mod 20$, so $13k\equiv 3 \mod 20$. Multiplying by the inverse 17: $(17)(13k)\equiv 51 \mod 20$, hence $k\equiv 11 \mod 20$. Setting $k=20n+11$:
$$ x = 13(20n+11)+4= 260n + 147.$$
Thus the day number corresponding to day (4, Deer) within one cycle of the \textit{Tonalpohualli} is day~\textbf{147}.\\

\begin{teo}
We have that $\mathbb{Z}_{13}\oplus \mathbb{Z}_{20}\cong\mathbb{Z}_{260}$ via the isomorphisms $\ell$ and $\imath$. Moreover, $\ell$ and $\imath$ are inverse mappings of each other.
\end{teo}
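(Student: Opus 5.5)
I would prove the theorem in three steps: check that $\ell$ is a well-defined homomorphism, show that $\imath\circ\ell$ and $\ell\circ\imath$ are both identities, and then conclude that $\ell$ and $\imath$ are mutually inverse isomorphisms.

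\textbf{Step 1: $\ell$ is a well-defined homomorphism.} Here $\ell(x)=(x \bmod 13,\ x \bmod 20)$. Since $13\mid 260$ and $20\mid 260$, changing the representative of $x$ modulo $260$ does not change either residue, so $\ell$ is well defined on $\mathbb{Z}_{260}$. Additivity follows because reduction modulo $13$ and modulo $20$ each respects addition:
$$\ell(x+y)=\bigl((x+y)\bmod 13,\ (x+y)\bmod 20\bigr)=\ell(x)\oplus\ell(y).$$

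\textbf{Step 2: the two composites are identities.} I will use the explicit formula derived above, $\imath((q,r))\equiv 40q-39r \pmod{260}$. This is well defined on $\mathbb{Z}_{13}\oplus\mathbb{Z}_{20}$: replacing $q$ by $q+13$ changes $x$ by $40\cdot 13=520\equiv 0$, and replacing $r$ by $r+20$ changes $x$ by $-39\cdot 20=-780\equiv 0 \pmod{260}$. It is visibly additive.

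For $\ell\circ\imath$, reduce $x=40q-39r$ modulo each factor:
\begin{itemize}
\item modulo $13$: $40\equiv 1$ and $39\equiv 0$, so $x\equiv q$;
\item modulo $20$: $40\equiv 0$ and $-39\equiv 1$, so $x\equiv r$.
\end{itemize}
Hence $\ell(\imath(q,r))=(q,r)$.

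For $\imath\circ\ell$, compute $\imath(\ell(x))\equiv 40x-39x=x \pmod{260}$. This step relies on $40q\equiv 40x \pmod{260}$ whenever $q\equiv x \pmod{13}$, which holds because $40\cdot 13=520\equiv 0$. Likewise $39r\equiv 39x$ whenever $r\equiv x \pmod{20}$, because $39\cdot 20=780\equiv 0$.

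\textbf{Step 3: conclusion.} Both composites are identities, so $\ell$ and $\imath$ are mutually inverse bijections. Since $\ell$ is a homomorphism, it is an isomorphism, and its inverse $\imath$ is then automatically a homomorphism, hence also an isomorphism. This gives $\mathbb{Z}_{13}\oplus\mathbb{Z}_{20}\cong\mathbb{Z}_{260}$.

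\textbf{Remarks.} The paper's notation writes $\ell(x)$ as a pair of congruences, so the main point needing care is the well-definedness of both maps on residue classes. The argument above makes that explicit. An alternative route to bijectivity is to note that $\ker\ell$ consists of the $x$ divisible by both $13$ and $20$, hence by $260$ since $\gcd(13,20)=1$, so the kernel is trivial. Then $\ell$ is injective between sets of equal size $260$, hence bijective. I would give this as a cross-check, but the explicit inverse computation in Step 2 is the main proof because it also proves the second assertion of the theorem.
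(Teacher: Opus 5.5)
Your proof is correct, but it takes a different route from the paper's. The paper treats $\imath$ abstractly as the unique simultaneous solution supplied by the Chinese Remainder Theorem. It then proves injectivity, surjectivity and additivity separately for $\imath$ and for $\ell$, and only at the end gets $\ell\circ\imath=\mathrm{Id}$ and $\imath\circ\ell=\mathrm{Id}$ from the uniqueness clause of the CRT.

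You instead work with the closed form $\imath(q,r)\equiv 40q-39r \pmod{260}$ and check both composites by direct computation. Coprimality of $13$ and $20$ enters only through the identity $40-39=1$ together with $13\mid 39$ and $20\mid 40$, plus $260\mid 40\cdot 13$ and $260\mid 39\cdot 20$ for well-definedness. In effect you reprove the CRT for these moduli instead of citing it. Your identity $\imath\circ\ell=\mathrm{Id}$ also gives uniqueness of solutions, so your formula necessarily agrees with the paper's definition of $\imath$.

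Your route buys economy and rigor:
\begin{itemize}
\item bijectivity of both maps and the inverse relation come out of one computation;
\item additivity of $\imath$ is automatic, either from linearity or as the inverse of the homomorphism $\ell$, whereas the paper only asserts it;
\item well-definedness on residue classes is handled explicitly, while the paper's ``injectivity'' argument for $\imath$ is really a well-definedness argument.
\end{itemize}
The paper's route, by contrast, never uses the specific inverses $2$ and $17$, so it carries over verbatim to any pair of coprime moduli without computing Bézout coefficients.
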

\begin{proof}
The mapping $\imath$ is an isomorphism: it is \textit{injective} since if two elements $x_1, x_2\in\mathbb{Z}_{260}$ are both images of the same pair $(q, r)$---i.e., solutions of the same system of congruences---then their difference $x_1 - x_2$ is divisible by both 13 and 20. Since 13 and 20 are relatively prime, $x_1 - x_2$ must be divisible by 260, so $x_1 = x_2$ in $\mathbb{Z}_{260}$. Thus each pair $(q, r)$ corresponds to a unique $x$. It is also \textit{surjective}: the Chinese Remainder Theorem guarantees that a solution $x$ exists and is unique modulo 260; since the set of possible values of $x$ has exactly 260 elements, every $x$ is the image of some pair.

Furthermore, $\imath$ preserves addition in $\mathbb{Z}_{13}\oplus \mathbb{Z}_{20}$, since for $(q_1+q_2, r_1+r_2)\in \mathbb{Z}_{260}$ we have $\imath ((q_1+q_2, r_1+r_2)) \equiv x_1+x_2 \mod 260$.

Now $\ell$ is also an isomorphism: it is injective since if $\ell(x_1)=\ell(x_2)$, then $x_1\equiv x_2 \bmod 13$ and $x_1\equiv x_2 \bmod 20$; since 13 and 20 are relatively prime, $x_1\equiv x_2 \bmod 260$, i.e., $x_1=x_2$ in $\mathbb{Z}_{260}$. It is surjective since given any $(q,r)\in\mathbb{Z}_{13}\oplus \mathbb{Z}_{20}$, the Chinese Remainder Theorem guarantees the existence of a unique $x\in\mathbb{Z}_{260}$ such that $x\equiv q \bmod 13$ and $x\equiv r \bmod 20$ simultaneously, so that $\ell(x)=(q,r)$; this $x$ is precisely $\imath((q,r))$.

$\ell$ preserves the addition operation from $\mathbb{Z}_{260}$ to $\mathbb{Z}_{13}\oplus \mathbb{Z}_{20}$. For $x_1, x_2\in\mathbb{Z}_{260}$:
$\ell(x_1+x_2)=((x_1+x_2)\bmod 13, (x_1+x_2)\bmod 20)$, and:
\begin{align*}
\ell(x_1)+ \ell(x_2)&=(x_1\bmod 13, x_1\bmod 20)+(x_2\bmod 13, x_2\bmod 20)\\
&=((x_1 \bmod 13) + (x_2 \bmod 13) \bmod 13,\; (x_1 \bmod 20)+(x_2\bmod 20) \bmod 20).
\end{align*}
Since $(x \bmod m) + (y\bmod m)\equiv x+y \bmod m$ for any modulus $m$, we have
$(x_1\bmod 13) + (x_2\bmod 13)\equiv (x_1+x_2)\bmod 13$
and similarly for modulo 20. Therefore $\ell(x_1+x_2)=\ell(x_1)+\ell(x_2)$ in $\mathbb{Z}_{13}\oplus \mathbb{Z}_{20}$.

Finally, these mappings are inverses of each other. For any $(q,r)\in \mathbb{Z}_{13}\oplus \mathbb{Z}_{20}$, $\imath((q,r))=x$ is the unique element satisfying both congruences; applying $\ell$:
$\ell(\imath((q,r)))=(x\bmod13, x\bmod 20) = (q,r)$, so $\ell\circ\imath=\mathrm{Id}_{\mathbb{Z}_{13}\oplus \mathbb{Z}_{20}}$. Conversely, for $x\in\mathbb{Z}_{260}$, $\ell(x)=(x\bmod 13, x\bmod 20)$, and $\imath$ applied to this pair gives the unique solution: $\imath(\ell(x))=x$, so $\imath\circ\ell=\textrm{Id}_{\mathbb{Z}_{260}}$.
\end{proof}

\begin{defi}
We call the system $\tau=\langle \mathbb{Z}_{13}\oplus\mathbb{Z}_{20}, \ell, \imath \rangle$ the \textbf{base \textit{Tonalpohualli}}.
\end{defi}

The foregoing results can be interpreted as follows:
\begin{obs}
For $\tau$, the base \textit{Tonalpohualli}:
\begin{itemize}
\item Given $x$, a day numbered from 1 to 260, the numeral $q$ and the calendrical sign $r$ constituting its calendrical name are determined by $\ell(x)=(x\equiv q \bmod 13,\; x\equiv r \bmod 20).$
\item Given $(q,r)$, the calendrical name of a day, the corresponding day number $x$ is given by: $x=\imath((q,r))= 13k + q$ with $k\equiv 17(r-q) \bmod 20$, or equivalently $x\equiv (40q-39r)\bmod 260$.
\end{itemize}
\end{obs}

\newpage

\subsection{The \textit{Tonalpohualli} as a permutations group} 

We have observed that identifying the structure of the \textit{Tonalpohualli} with the direct sum $\mathbb{Z}_{13}\oplus\mathbb{Z}_{20}$ allowed us to formulate, in an organized manner, the correspondences between the day names of the ritual calendar and the 260 days identified with $\mathbb{Z}_{260}$.

For the study of the \textit{Tonalpohualli}, it is of particular interest to identify the day names that mark the beginning of each \textit{trecena} or each \textit{veintena}. Specifically, given the calendrical name of an arbitrary day, one wishes to identify and determine the name of the starting day of its corresponding \textit{trecena} or \textit{veintena}. As shown in this section, this can be accomplished by considering permutations of the set $\{0, 1,2,3,\ldots, 259\}$ or, more generally, through actions of $\mathbb{Z}_{13}\oplus\mathbb{Z}_{20}$ on that set.

Cayley's theorem, a classical result in algebra, states that every finite group can be represented as a group of permutations. We now show how $\mathbb{Z}_{13}\oplus\mathbb{Z}_{20}$ can be embedded in $S_{260}$, the symmetric group of order 260---that is, the group of all possible permutations of a set of 260 elements. Recall that each permutation is a bijective function from the set to itself, and the group operation is composition of permutations.

To this end, we construct a correspondence that transforms each element of $\mathbb{Z}_{13}\oplus\mathbb{Z}_{20}$ into a permutation of $\{0, 1,2,3,\ldots, 259\}$. In this section we work with this set rather than the full group $\mathbb{Z}_{260}$.

Let $(a,b)$ be a fixed element of $\mathbb{Z}_{13}\oplus\mathbb{Z}_{20}$. Define the transformation $\varphi_{(a,b)}$ by:
\begin{align*}
\varphi_{(a,b)}:\mathbb{Z}_{13}\oplus\mathbb{Z}_{20} &\longmapsto \mathbb{Z}_{13}\oplus\mathbb{Z}_{20} \\
 (x,y)&\longmapsto (a+x \mod 13,\; b+y \mod 20).
\end{align*}
This transformation defines an action of $\mathbb{Z}_{13}\oplus\mathbb{Z}_{20}$ on itself---the left-translation action---since addition modulo 13 and 20 is associative, and for the identity element $(0,0)$ we have $\varphi_{(0,0)}((x,y))=(x,y)$.

For each $(a,b)\in \mathbb{Z}_{13}\oplus\mathbb{Z}_{20}$, the translation $\varphi_{(a,b)}$ induces a permutation of $\{0,1,2,\ldots, 259\}$: under $\ell$, each day number corresponds to a day name in the ritual calendar, and under $\imath$, each day name corresponds to a day number.\\

The composition $\imath\circ\varphi_{(a,b)}\circ\ell$ is therefore a permutation of $\{0,1,2,\ldots, 259\}$:
$$\begin{tikzcd}[column sep=large, row sep=large]
\mathbb{Z}_{13}\oplus\mathbb{Z}_{20}
  \arrow[rr, "\varphi_{(a,b)}"]
&&
\mathbb{Z}_{13}\oplus\mathbb{Z}_{20}
  \arrow[dl, "\imath"]
\\
& \{0,\ldots,259\} \arrow[ul, "\ell"'] &
\end{tikzcd}$$
$\imath\circ\varphi_{(a,b)}\circ\ell(x)=\imath(\varphi_{(a,b)}(\ell(x)))=\imath(\varphi_{(a,b)}(q,r))=\imath\!\left( (a+q \bmod 13,\; b+r \bmod 20) \right)=y$,

where $y=14k+(a+q)$ with $k\equiv 17[(b+r)-(a+q)] \bmod 20$, so $x$ is mapped to $y$. Since this holds for every $x$ and any $(a,b)$, the composition is indeed a permutation.

We observe that the generator $(1,1)$ of $\mathbb{Z}_{13}\oplus\mathbb{Z}_{20}$ corresponds to the permutation $(x, y) \mapsto (x+1 \mod13,\; y+1 \mod 20)$, which traverses all 260 elements. Thus, through $\varphi_{(a,b)}$, the group $\mathbb{Z}_{13}\oplus\mathbb{Z}_{20}$ is expressed as a group of permutations.

This composition induces an action of $\mathbb{Z}_{13}\oplus\mathbb{Z}_{20}$ on the set $\{0,1,2,\ldots, 259\}$:
\begin{align*}
\mathrm{T}:\mathbb{Z} _{13}\oplus\mathbb{Z}_{20}\times \{0,1,2,\ldots, 259\} &\longmapsto \{0,1,2,\ldots, 259\} \\
((a,b), x) &\longmapsto y
\end{align*}
given by $\mathrm{T}_{(a,b)}(x)=\imath\circ\varphi_{(a,b)}\circ\ell(x)$. The identity element acts trivially and the action preserves the group operation:
\begin{itemize}
\item For $(0,0)\in \mathbb{Z} _{13}\oplus\mathbb{Z}_{20}$, $\varphi_{(0,0)}$ is the identity, so $\mathrm{T}((0,0),x)=\imath(\varphi_{(0,0)}(\ell(x)))=\imath(\ell(x))=x$.
\item Since addition is associative in $\mathbb{Z} _{13}\oplus\mathbb{Z}_{20}$, we have $\mathrm{T}_{(a+c, b+d)}(x)=\imath(\varphi_{(a+c, b+d)}(\ell(x)))$.
On the other hand, if $\mathrm{T}_{(c,d)}(x)=\imath(\varphi_{(c,d)}(\ell(x))):=y$ and $\mathrm{T}_{(a,b)}(y)=\imath(\varphi_{(a,b)}(\ell(y)))$, then $\ell(y)=\varphi_{(c,d)}(\ell(x))$ since $\imath$ and $\ell$ are inverses. Therefore:
$$\mathrm{T}_{(a,b)}(y)=\imath(\varphi_{(a,b)}(\varphi_{(c,d)}(\ell(x))))= \imath((\varphi_{(a,b)}\circ\varphi_{(c,d)})(\ell(x))).$$
Since $\varphi_{(a,b)}\circ \varphi_{(c,d)}=\varphi_{(a+c,b+d)}$, we conclude:
$$\mathrm{T}_{(a,b)}(\mathrm{T}_{(c,d)}(x))=\imath(\varphi_{(a+c,b+d)}(\ell(x)))= \mathrm{T}_{(a+c,b+d)}(x).$$
\end{itemize}

We observe that if, instead of the set $\{0, 1,2,\ldots, 259\}$, we considered the full group $\mathbb{Z}_{260}$, the action would be an inner automorphism of $\mathbb{Z}_{260}$, corresponding to translation by the fixed element $t=\imath((a,b))\in \mathbb{Z}_{260}$. Thus $\mathbb{Z}_{260}\cong\mathbb{Z}_{13}\oplus\mathbb{Z}_{20}$ is embedded in $S_{260}$.

\begin{teo}
The map $\mathrm{T}_{(a,b)}(x)=\imath\circ\varphi_{(a,b)}\circ\ell(x)$ is an action of the group $\mathbb{Z}_{13}\oplus\mathbb{Z}_{20}$ on the set $\{1,2,3,\ldots, 259,0\}$ that embeds $\mathbb{Z}_{13}\oplus\mathbb{Z}_{20}$ into $S_{260}$. Each element of $\mathbb{Z}_{13}\oplus\mathbb{Z}_{20}$ is associated with a permutation of $\{1,2,3,\ldots, 259,0\}$.
\end{teo}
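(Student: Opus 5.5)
The plan is to assemble three facts: each $\mathrm{T}_{(a,b)}$ is a permutation of $\{1,2,\ldots,259,0\}$, the assignment $(a,b)\mapsto \mathrm{T}_{(a,b)}$ is a group action (equivalently a homomorphism into $S_{260}$), and this homomorphism is injective.

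\textbf{Step 1: each $\mathrm{T}_{(a,b)}$ is a bijection.} By the Theorem above, $\ell$ and $\imath$ are mutually inverse bijections between $\{0,\ldots,259\}$ (identified with $\mathbb{Z}_{260}$) and $\mathbb{Z}_{13}\oplus\mathbb{Z}_{20}$. The translation $\varphi_{(a,b)}$ is a bijection with inverse $\varphi_{(-a,-b)}$, since $\varphi_{(a,b)}\circ\varphi_{(-a,-b)}=\varphi_{(0,0)}=\mathrm{Id}$. Hence
\[
\mathrm{T}_{(a,b)}=\imath\circ\varphi_{(a,b)}\circ\ell
\]
is a composition of bijections, so it lies in $S_{260}$. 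Its inverse is $\imath\circ\varphi_{(-a,-b)}\circ\ell$, because the inner factors $\ell\circ\imath$ cancel to the identity.

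\textbf{Step 2: the action axioms.} These have already been verified in the computation preceding the statement. There it is shown that $\mathrm{T}_{(0,0)}=\mathrm{Id}$ and that $\mathrm{T}_{(a,b)}\circ\mathrm{T}_{(c,d)}=\mathrm{T}_{(a+c,b+d)}$. The key point is $\ell\circ\imath=\mathrm{Id}$, so the middle factors collapse and one is left with $\varphi_{(a,b)}\circ\varphi_{(c,d)}=\varphi_{(a+c,b+d)}$. I would cite that computation. It shows that $\Phi:(a,b)\mapsto\mathrm{T}_{(a,b)}$ is a group homomorphism $\mathbb{Z}_{13}\oplus\mathbb{Z}_{20}\to S_{260}$.

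\textbf{Step 3: injectivity of $\Phi$.} This is the only genuinely new point. Suppose $\mathrm{T}_{(a,b)}=\mathrm{Id}$. Evaluating at $x=0$ gives $\ell(0)=(0,0)$, hence
\[
\mathrm{T}_{(a,b)}(0)=\imath((a,b)).
\]
So $\imath((a,b))=0$, and since $\imath$ is injective with $\imath((0,0))=0$, we get $(a,b)=(0,0)$. Thus $\ker\Phi$ is trivial and $\Phi$ is an embedding.

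A cleaner alternative is to use the additivity of $\imath$ and $\ell$ to write $\mathrm{T}_{(a,b)}(x)=x+\imath((a,b)) \bmod 260$, a translation of $\mathbb{Z}_{260}$. Injectivity is then immediate, and the action is visibly free and transitive.

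\textbf{Main obstacle.} The only care needed is bookkeeping: the set is written as $\{1,\ldots,259,0\}$, and residues must be read consistently in $\mathbb{Z}_{260}$. The displayed formula $y=14k+(a+q)$ is a typo for $13k+(a+q)$. Since it is not needed, I would rely on the abstract composition rather than on that formula.
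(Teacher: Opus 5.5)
Your proof is correct and follows essentially the paper's route. Your Step~2 is exactly the paper's verification of the action axioms using $\ell\circ\imath=\mathrm{Id}$, and your alternative formula $\mathrm{T}_{(a,b)}(x)=x+\imath((a,b))$ is the paper's remark that the action is translation by $t=\imath((a,b))$ in $\mathbb{Z}_{260}$, from which it reads off the embedding into $S_{260}$ via Cayley's theorem. Your Step~1 and the kernel check at $x=0$ make explicit the bijectivity and faithfulness that the paper leaves implicit, and you are right that $14k$ should be $13k$.
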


With the foregoing, any permutation of the set $\{0, 1,2,\ldots, 259\}$ induced by the group $\mathbb{Z}_{13}\oplus\mathbb{Z}_{20}$ can be explicitly constructed. Moreover, each $\varphi_{(a,b)}$ may be regarded as a transformation that translates the days of the ritual calendar as required.

\textbf{Example:} To move day number 1 to day number 21---that is, day (1, Crocodile) or (1,1) to day (8, Crocodile) or (8,1)---we seek $(a,b)\in\mathbb{Z}_{13}\oplus\mathbb{Z}_{20}$ satisfying $\varphi_{(a,b)}(1,1)=(8,1)$.

By definition of $\varphi_{(a,b)}$: $(a+1 \mod 13,\; b+1 \mod 20)=(8,1)$, so $a+1\equiv 8 \mod 13$ and $b+1 \equiv 1 \mod 20$. Thus $a \equiv 7 \mod 13$ (take $a=7$) and $b\equiv 0 \mod 20$ (take $b=0$). Hence $\varphi_{(7,0)}(1,1)=(8,1)$, and $\varphi_{(7,0)}$ moves day 1 to day 21.

We also note that $\varphi_{(7,0)}(8,1)=(7+8 \mod 13,\; 0+1 \mod 20)= (2,1)$, so $\varphi_{(7,0)}$ moves day 21 to day 41. Furthermore, $\varphi_{(7,0)}(10,10)=(4,10)$ and $\varphi_{(7,0)}(7,0)=(1,0)$, so day 10 moves to day 30 and day 20 to day 40. In general, $\varphi_{(7,0)}$ corresponds to a shift of 20 days.

Further examples: $\varphi_{(1,0)}$ shifts by 40 days, $\varphi_{(8,0)}$ shifts by 60 days, and $\varphi_{(6,0)}$ shifts by 240 days. Three transformations of this type are of particular interest for the study of codices: the one identifying the start of each \textit{trecena}, the one identifying the start of each \textit{veintena}, and the one forming tetrads.

\subsection{The Twenty \textit{Trecenas} and the Thirteen \textit{Veintenas} of the \textit{Tonalpohualli}}

As mentioned earlier, the days of the \textit{Tonalpohualli} are organized into twenty \textit{trecenas}. We now wish to determine a transformation that moves days between \textit{trecenas}. Following the same procedure as before, we seek an element $(a, b)$ of $\mathbb{Z}_{13}\oplus\mathbb{Z}_{20}$ such that the associated transformation $\varphi_{(a,b)}$ sends day (1,1) to (1,14). The condition $\varphi_{(a,b)}(1,1)=(1,14)$ requires:
$$\varphi_{(a,b)}(1,1)=(a+1 \mod 13\;,\; b+1 \mod 20)=(1,14)$$
so $a+1\equiv 1 \mod 13$ and $b+1\equiv 14 \mod 20$, equivalently $a \equiv 0 \mod 13$ and $b\equiv 13 \mod 20$. We may take $a=0$ and $b=13$.

The transformation $\varphi_{(0,13)}$ indeed moves days between \textit{trecenas}: $\varphi_{(0,13)}(1,1)= (1,14)$, $\varphi_{(0,13)}(1,14)= (1,7)$, $\varphi_{(0,13)}(1,7)= (1, 0)$, and $\varphi_{(0,13)}(1,8)= (1, 1)$. That is, day (1, Crocodile) or (1,1) moves to (1, Jaguar) or (1,14); successively applying $\varphi_{(0,13)}$, (1, Jaguar) moves to (1, Deer), which moves to (1, Flower); finally, (1, Rabbit) moves back to (1, Crocodile). Thus we can determine the numeral and calendrical sign forming the name of the first day of each thirteen-day period. Note that in each case the numeral is always 1; what varies is the calendrical sign.

The orbit of the element (1,1) under $\varphi_{(0,13)}$ is:
$$\varphi_{(0,13)}^{n}(1,1)=\left\{ \; \big(1, \; 1+13n \mod 20 \big) \in \mathbb{Z}_{13}\oplus\mathbb{Z}_{20}, \; n\in\mathbb{N} \right\}$$
where
$$\varphi^{n}_{(0,13)} =\varphi_{(0,13)}\!\left(\varphi_{(0,13)}\!\left(\cdots \varphi_{(0,13)}\!\left(\varphi_{(0,13)}(1,1)\cdots \right)\right)\right) \quad n\text{-times.}$$
This set represents the $n$-th iteration of $\varphi_{(0,13)}$ applied to (1,1).

In Table~3, iterating the transformation yields elements of $\mathbb{Z}_{13}\oplus\mathbb{Z}_{20}$ of the form $(1,r)$; the different values of $r$ give the list of the 20 starting signs of each \textit{trecena}. Equivalently, this is the restriction of the orbit of (1,1) under $\varphi_{(0,13)}$ to $\mathbb{Z}_{20}$:
$$\varphi_{(0,13)}^{n}(1,1)\big|_{\mathbb{Z}_{20}}=\left\{  r\in \mathbb{Z}_{20} \;\mid\; r= 1+ 13n \mod 20 \right\}.$$
Considering the orbit of each element $(q,r)$ of $\mathbb{Z}_{13}\oplus\mathbb{Z}_{20}$ yields the calendrical signs of the second day of each \textit{trecena}, the third day, and so on. This allows us to construct Table~2, organized by the start of the \textit{trecenas}, which is equivalent to Table~1.1 in \parencite[p.~4]{milbrath2013}.

We may proceed analogously with $\varphi_{(7,0)}$---the 20-day shift constructed in the previous section---whose orbit of (1,1) is:
$$\varphi_{(7,0)}^{n}(1,1)=\left\{ \; \big(1+7n \mod 13, \; 1 \big) \in \mathbb{Z}_{13}\oplus\mathbb{Z}_{20}, \; n\in\mathbb{N} \right\}.$$
The restriction of this orbit to $\mathbb{Z}_{13}$:
$$\varphi_{(7,0)}^{n}(1,1)\big|_{\mathbb{Z}_{13}}=\left\{ q\in \mathbb{Z}_{13} \;\mid\; q= 1+ 7n \mod 13 \right\}$$
gives the list of numerals of the day names that begin each 20-day period.

Considering the orbit of each element $(q,r)$ of $\mathbb{Z}_{13}\oplus\mathbb{Z}_{20}$ under $\varphi_{(7,0)}$ yields sets equivalent to Table~1 in \parencite[p.~33]{dehouve2023}. In Table~3, the elements of $\mathbb{Z}_{13}\oplus\mathbb{Z}_{20}$ of the form $(q,1)$ (marked in grey) give the 13 numerals of the names of the days that begin each \textit{veintena}.

\begin{teo}
In $\mathbb{Z}_{13}\oplus\mathbb{Z}_{20}$, the transformation $\varphi_{(0,13)}$ corresponds to a shift of 13 elements and the transformation $\varphi_{(7,0)}$ corresponds to a shift of 20 elements. The restrictions of the orbits of $(1,1)$ in the \textit{Tonalpohualli}:
\begin{itemize}
\item $\varphi_{(0,13)}^{n}(1,1)\big|_{\mathbb{Z}_{20}}$ corresponds to the set of calendrical signs of the day names that begin each \textit{trecena}.
\item $\varphi_{(7,0)}^{n}(1,1)\big|_{\mathbb{Z}_{13}}$ corresponds to the list of numerals of the day names that begin each \textit{veintena}.
\end{itemize}
\end{teo}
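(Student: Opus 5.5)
The plan is to transport everything to $\mathbb{Z}_{260}$ via the isomorphism $\imath$ of Theorem~1 and read each statement off there. Since $\imath$ is a group isomorphism, the translation $\varphi_{(a,b)}$ corresponds to $x\mapsto x+\imath((a,b))$ in $\mathbb{Z}_{260}$; this is the remark made after Theorem~2. So the first step is to compute the two shift amounts with the formula $\imath((q,r))\equiv 40q-39r \pmod{260}$. This gives $\imath((0,13))\equiv -39\cdot 13=-507\equiv 13 \pmod{260}$ and $\imath((7,0))\equiv 280\equiv 20 \pmod{260}$. As a check, $13\equiv 0 \pmod{13}$ and $13\equiv 13\pmod{20}$, while $20\equiv 7\pmod{13}$ and $20\equiv 0\pmod{20}$. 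Hence $\mathrm{T}_{(0,13)}(x)=x+13$ and $\mathrm{T}_{(7,0)}(x)=x+20$, which proves the first sentence of the statement.

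Next I would make precise what ``begins a \textit{trecena}'' and ``begins a \textit{veintena}'' mean inside the model. With the labelling where day $1$ is $(1,1)$, the \textit{trecenas} are the blocks $\{13m+1,\ldots,13m+13\}$ for $m=0,\ldots,19$, so their first days are the numbers $x\equiv 1 \pmod{13}$. Likewise the \textit{veintenas} are the blocks of twenty consecutive days starting at $x\equiv 1\pmod{20}$, for $m=0,\ldots,12$.

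By the first step, $\varphi^n_{(0,13)}(1,1)=\ell(1+13n)$. As $n$ runs over $0,\ldots,19$, the numbers $1+13n$ run exactly over the twenty \textit{trecena} starts, all distinct modulo $260$. Applying $\ell$ gives first coordinate $1+13n\equiv 1\pmod{13}$, which explains why the numeral is always $1$. The second coordinate is $1+13n \bmod 20$, and since $\gcd(13,20)=1$ these are the twenty distinct signs. Restricting to $\mathbb{Z}_{20}$ therefore gives exactly the set of signs of the \textit{trecena} starting days, in calendrical order.

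Symmetrically, $\varphi^n_{(7,0)}(1,1)=\ell(1+20n)$ for $n=0,\ldots,12$ runs over the thirteen \textit{veintena} starts. Each has sign $1$ (Crocodile), and the numeral is $1+20n\equiv 1+7n \pmod{13}$. These numerals are distinct because $7$ is invertible modulo $13$. This is the asserted list, matching the grey entries of Table~3.

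The computations themselves are routine. The main obstacle is conceptual: the statement is phrased informally (``corresponds to a shift''), so the proof must fix the interpretation in which shifts live in $\mathbb{Z}_{260}$ via $\mathrm{T}$, and pin down the \textit{trecena} and \textit{veintena} boundaries via day numbering. It must also stay consistent with the $0$-versus-$13$ convention for the numeral. I would handle this by stating the convention explicitly at the outset and then checking the endpoint cases $n=0$ and $n=19$ (respectively $n=12$).
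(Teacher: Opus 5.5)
Your proof is correct, and it organizes the argument differently from the paper. The paper works entirely inside $\mathbb{Z}_{13}\oplus\mathbb{Z}_{20}$. It finds the translating element by solving $\varphi_{(a,b)}(1,1)=(1,14)$, respectively $\varphi_{(a,b)}(1,1)=(8,1)$. It then spot-checks a few days (for $\varphi_{(7,0)}$: day $1\mapsto 21$, $21\mapsto 41$, $10\mapsto 30$, $20\mapsto 40$) before asserting the shift ``in general''. Finally it writes the orbits as $(1,\,1+13n \bmod 20)$ and $(1+7n \bmod 13,\,1)$ and reads the starting signs and numerals off Table~3.

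You instead transport everything to $\mathbb{Z}_{260}$. From $\imath((0,13))=13$, $\imath((7,0))=20$, the additivity of $\imath$ and $\imath\circ\ell=\mathrm{Id}$ (Theorem~1), you obtain $\mathrm{T}_{(0,13)}(x)=x+13$ and $\mathrm{T}_{(7,0)}(x)=x+20$ for \emph{every} $x$. This turns the paper's example-based shift claim into an actual proof. You also fix the \textit{trecena} and \textit{veintena} starts as the classes $x\equiv 1\pmod{13}$ and $x\equiv 1\pmod{20}$, and identify the orbits with $\ell(1+13n)$ and $\ell(1+20n)$. That shows each orbit meets every start exactly once, via $\gcd(13,20)=1$ and the invertibility of $7$ modulo $13$, without appealing to the table.

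The two key computations are inverse to one another: the paper effectively computes $(a,b)=\ell(14)-\ell(1)=\ell(13)$, while you compute $\imath((0,13))=13$. The paper's version is more hands-on: you find the translation from one known pair of days, as one would when reading a codex. Yours makes explicit the definitions the informal statement leaves implicit, and it covers all days and all periods at once.
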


Therefore, given a \textit{veintena}, one can determine the numeral corresponding to the name of the day that begins it, and vice versa. Likewise, given a \textit{trecena}, one can determine the calendrical sign of the day name with which it begins, and vice versa.

\subsection{The Tetrads of the \textit{Tonalpohualli}}

In plates 1 through 8 of the Borgia Codex, the 260 calendrical signs are arranged in four groups of two plates, each pair presenting 65 signs organized in 5 rows and 13 columns. The name corresponding to each day is implicit: the numeral is inferred from the position of the calendrical sign in the array. This constitutes an in-extenso representation of the 260 days of the \textit{Tonalpohualli} \parencite{andersjansen1993,diazrodgersbyland1993,milbrath2013,seler1979}. The days are read from right to left starting at the lower-right corner of plate 1, continuing until the first 52 days are complete at the end of the first row in plate 8. The sequence then continues at the beginning of the second row in plate 1, proceeding until the end of that row in plate 8 (completing 104 days), and so on. The first column of each pair of plates presents the 5 different signs associated with the names of the days that begin each \textit{trecena}.

Both \parencite{seler1979} and \parencite{dehouve2023} emphasize the various ways of dividing the \textit{Tonalpohualli} into tetrads---sets of four days or four \textit{trecenas}: the 260 days can be grouped into 4 sets of 65, into sets of 4 consecutive days, and each 52-day block can be subdivided into four \textit{trecenas}. Moreover, as noted in the first section, since the 52 years of the \textit{Xiuhmolpilli} can be organized into four groups each associated with a cardinal direction \parencite{duran1880}, Seler and Dehouve speak, by analogy, of both oriented days and oriented \textit{trecenas}, assigning each day or \textit{trecena} an element of the ordered set \{East, North, West, South\}.

We now show how these tetrad subdivisions can be described in terms of transformations of the group $\mathbb{Z}_{13}\oplus\mathbb{Z}_{20}$. In particular, we wish to determine the set of calendrical signs corresponding to the names of each oriented \textit{trecena}, as well as the numerals and signs of the day names grouped by orientation.

To this end, we determine the transformation that shifts each day by 4 days: we require that (1, Crocodile) move to (5, Serpent), i.e., (1,1) maps to (5,5). We seek $(a,b)\in\mathbb{Z}_{13}\oplus\mathbb{Z}_{20}$ such that
$$\varphi_{(a,b)}{(1,1)}=(a+1\mod 13\;,\;b+1\mod 20) = (5,5)$$
so $a\equiv 4 \mod 13$ and $b\equiv 4 \mod 20$; we take $a=4,\, b=4$, giving
$$\varphi_{(4,4)}(x,y)=(4+x \mod 13\;,\; 4+y \mod 20).$$
Under this transformation, day 1 goes to 5, day 2 to 6, day 3 to 7, and so on---a permutation of all 260 elements of $\mathbb{Z}_{13}\oplus\mathbb{Z}_{20}$, as before. The orbit of (1,1) under $\varphi_{(4,4)}$ is:
$$\varphi_{(4,4)}^{n}(1,1)=\left\{ \; \big(1+4n \mod 13, \; 1+4n \mod 20 \big) \in \mathbb{Z}_{13}\oplus\mathbb{Z}_{20}, \; n\in\mathbb{N} \right\}.$$
Table~4 presents the elements of this orbit after 11 iterations. We observe that the calendrical signs of each day name repeat every 5 iterations, and if we associate the orientation \textit{East} with day (1,1), following \parencite{dehouve2023}, then this orbit corresponds to the names of all days associated with that orientation.

That is, the orbit $\varphi^{n}_{(4,4)}(1,1)$ corresponds to the days associated with the East. The orbits $\varphi^{n}_{(4,4)}(2,2)$, $\varphi^{n}_{(4,4)}(3,3)$, and $\varphi^{n}_{(4,4)}(4,4)$ correspond to the day names associated with the North, West, and South, respectively. This is illustrated in Table~4 for the orbits of (1,1) and (4,4) under this transformation.
\newpage

To determine the names of the first days of each oriented \textit{trecena}---specifically the signs of those days, as represented in the first column of plates 1, 3, 5, and 7 of the Borgia Codex---we hold the day numeral fixed and consider the effects of the transformation on the second coordinate only, for those day names that correspond to the start of each \textit{trecena}. This amounts to considering the orbits of (1,1), (1,14), (1,7), and (1,0) under $\varphi_{(0,4)}$, respectively.

Table~5 shows the values of the orbits $\varphi_{(0,4)}^n(1,1)$, $\varphi_{(0,4)}^n(1,14)$, $\varphi_{(0,4)}^n(1,7)$, and $\varphi_{(0,4)}^n(1,0)$. The sets of calendrical signs associated with the start of each oriented \textit{trecena} are: $\{1, 5, 9, 13, 17\}$ (East), $\{14, 18, 2, 6, 10\}$ (North), $\{7, 11, 15, 19, 3\}$ (West), and $\{0,4,8,12,16\}$ (South). These results coincide with the findings presented in Tables~3 and~4 in \parencite[pp.~37--38]{dehouve2023}. We now discuss the relationship between these sets and the twenty \textit{trecenas} of the \textit{Tonalpohualli}.

\subsection{The Twenty \textit{Trecenas} of the \textit{Tonalpohualli} as Permutations and Their Relationship with the Tetrad Decomposition}

We now observe that the restriction $\varphi_{(0,13)}^{n}(1,1)\big|_{\mathbb{Z}_{20}}$ allows us to express the list of the 20 calendrical signs marking the start of each thirteen-day period as a permutation of the 20 calendrical signs, by an argument analogous to that in Section~2.2: Cayley's theorem allows us to express this restriction as a permutation in the symmetric group $S_{20}$. Each of the twenty \textit{trecenas} is assigned the calendrical sign of the name of the day with which it begins. For example, \textit{trecena}~5 is assigned the value 13, corresponding to the sign Reed, since the name of the day beginning that \textit{trecena} is (1,13) or (1, Reed). As another example, the fourth \textit{trecena} is assigned the value 0, which modulo 20 corresponds to the sign Flower, since it begins with day (1,20) or (1, Flower). Thus the restriction $\varphi_{(0,13)}^{n}(1,1)\big|_{\mathbb{Z}_{20}}$ of the complete orbit, in permutation notation, is:
$$\sigma = \left(\begin{array}{cccccccccccccccccccc} 1 & 2 & 3 & 4& 5& 6& 7& 8& 9& 10& 11& 12& 13& 14& 15& 16& 17& 18& 19& 0\\
 1 & 14 & 7 & 0& 13& 6& 19& 12& 5& 18& 11& 4& 17& 10& 3& 16& 9& 2& 15& 8 \end{array}\right)$$
This is none other than the permutation presented by Pi\~{n}a Garza, who in 1999 had already outlined some of the basic arithmetic properties that hint at the underlying algebraic structure \parencite{pinagarza1999}. In cycle notation:
$$ \sigma = (1) \; (2,14,10,18) \; (3,7,19,15) \; (4,0,8,12)  \; (5, 13,17,9)  \; (6) \; (11) \; (16)$$
We observe that $\sigma$ has 4 fixed points---1, 6, 11, and 16---and can be expressed as a product of 4 disjoint cycles. Since these cycles are disjoint, the order of $\sigma$ equals the least common multiple of the cycle lengths, which is 4.

Moreover, since the sign of a permutation is determined by the number of transpositions into which it decomposes, and each 4-cycle decomposes into 3 transpositions, the total number of transpositions for the four cycles is 12; hence the permutation is even. The decomposition into cycles is:
$$ \sigma =  (2,14,10,18) \; (3,7,19,15) \; (4,0,8,12)  \; (5, 13,17,9).$$

Considering the powers of $\sigma$ (its products with itself under composition):
\begin{itemize}
\item$\sigma^{0} = \textit{id}$
\item$\sigma^{1} =\sigma$
\item$\sigma^{2}=(2,10)(14,18)(3,19)(7,15)(4,8)(20,12)(5,17)(13,9)$
\item$\sigma^{3}=(2,18,10,14)(3,15,19,7)(4,12,8,20)(5,9,17,13)$
\end{itemize}
We observe that $\sigma^{3}=\sigma^{-1}$, and $(\sigma^{-1})^{2}=\sigma^{2}$. This set satisfies the group axioms (closure, identity, inverses), so $\langle\sigma\rangle$ is the cyclic subgroup of $S_{20}$ generated by $\sigma$:
$$\langle\sigma\rangle = \{ \sigma^{0} = \textit{id}, \; \sigma^{1} =\sigma, \; \sigma^{2}, \; \sigma^{3}\}.$$
This subgroup is isomorphic to the finite cyclic group of order 4, $\mathbb{Z}/4\mathbb{Z} = \{ \overline{0}, \overline{1}, \overline{2}, \overline{3}\}$ with addition modulo 4: $\overline{a}+\overline{b} = \overline{a+b} \mod 4$, where $\overline{k}$ denotes the equivalence class of integers congruent to $k$ modulo 4.
\newpage
Assigning each equivalence class modulo 4 to the corresponding power of $\sigma$, we construct a map $\phi: \mathbb{Z}/4\mathbb{Z} \longrightarrow \langle\sigma\rangle$ defined by $\phi(\overline{k})=\sigma^{k}$. This map preserves the group operation: for any $\overline{a},\overline{b}\in \mathbb{Z}/4\mathbb{Z}$,
$$\phi(\overline{a}+\overline{b})=\sigma^{a+b}=\sigma^{a}\circ\sigma^{b}=\phi(\overline{a})\circ\phi(\overline{b})$$
since exponentiation in a cyclic group is additive. Injectivity follows from $\phi(\overline{a})=\phi(\overline{b})\Rightarrow \sigma^{a}=\sigma^{b}\Rightarrow a\equiv b\mod 4\Rightarrow \overline{a}=\overline{b}$; surjectivity is clear. Thus $\phi$ is an isomorphism.

Finally, $\mathbb{Z}/4\mathbb{Z}$ is also isomorphic to the rotation group of the square, $\mathrm{Rot}(\square)$---the group of four rotations (counterclockwise by 0°, 90°, 180°, and 270°) under composition. The isomorphism $\psi: \mathbb{Z}/4\mathbb{Z} \longrightarrow \mathrm{Rot}(\square)$ is given by $\psi(\overline{k}) = \text{rotation by } 90^{\circ}\times k$, where $k\in\{0,1,2,3\}$. Since addition of angles corresponds to composition of rotations, $\psi$ is indeed an isomorphism.

This coincides with the cultural conclusions established by Dehouve in \parencite{dehouve2023}: the groupings into tetrads and their association with the cardinal directions and their rotations are an inherent feature of the algebraic structure of the Mesoamerican ritual calendar.

\newpage
\section{Conclusions and Perspectives}

The \textit{Tonalpohualli}, the 260-day ritual calendar widely used throughout Mesoamerica, can be described as the group $\mathbb{Z}_{13}\oplus\mathbb{Z}_{20}$. This representation allows us, given a day number, to determine the numeral and calendrical sign forming its name in the ritual calendar, and conversely, given the day name, to determine the corresponding day number. Moreover, by Cayley's theorem, this group can be viewed as a subgroup of permutations of the symmetric group $S_{260}$ via the action $\mathrm{T}_{(a,b)}(x)=\imath\circ\varphi_{(a,b)}\circ\ell(x)$ of $\mathbb{Z}_{13}\oplus\mathbb{Z}_{20}$ on $\{1,2,\ldots,259,0\}$.

From this action, the transformations $\varphi_{(7, 0)}$ and $\varphi_{(0,13)}$---which shift each day by 20 and 13 days, respectively---allow us, through the orbits $\varphi_{(0,13)}^{n}(1,1)\big|_{\mathbb{Z}_{20}}$ and $\varphi_{(7,0)}^{n}(1,1)\big|_{\mathbb{Z}_{13}}$, to determine: given a \textit{veintena}, the numeral of the day name with which it begins, and vice versa; and given a \textit{trecena}, the calendrical sign of the day name with which it begins, and vice versa. Analogously, the transformation $\varphi_{(4,4)}$ applied to the first four days organizes the remaining 260 days into groups of 4 and associates them with the corresponding orientations. The orbits $\varphi_{(0,4)}^n(1,1)$, $\varphi_{(0,4)}^n(1,14)$, $\varphi_{(0,4)}^n(1,7)$, and $\varphi_{(0,4)}^n(1,0)$ correspond to the sets of signs of the day names beginning each oriented \textit{trecena}. Furthermore, the list of the 20 calendrical signs marking the start of each thirteen-day period can be expressed, via the orbit $\varphi_{(0,13)}^{n}(1,1)\big|_{\mathbb{Z}_{20}}$, as the permutation
$$ \sigma =  (2,14,10,18) \; (3,7,19,15) \; (4,0,8,12)  \; (5, 13,17,9)$$
of the 20 calendrical signs. The cycle decomposition of this permutation coincides with the sets of starting signs of each oriented \textit{trecena}. The subgroup of $S_{20}$ generated by this permutation is isomorphic to $\mathbb{Z}/4\mathbb{Z}$, which is in turn isomorphic to the rotation group of the square (0°, 90°, 180°, 270°)---a result consistent with the correspondence of each day or \textit{trecena} with an oriented tetrad (East, North, West, South).

These representations allow us to determine, precisely, the relationships between the day number, the numeral and calendrical sign forming its name, and its position by \textit{trecena}, \textit{veintena}, or tetrad. This in turn aids in the reading of codices---for example, the identification of \textit{trecenas} and day names in plates 47 and 48 of the Borgia Codex, or the various dates in the codex \textit{Yuta Tnoho} (Vindobonensis) or the codex \textit{Tonindeye} (Zouche-Nuttall).

The algebraic description discussed in this work, far from exhaustive, reveals that much remains to be studied: for instance, the relationship of the group $\mathbb{Z}_{13}\oplus\mathbb{Z}_{20}$ or the subgroup $\mathbb{Z}/4\mathbb{Z}$ with the division of the year into 18 festivities of 20 days each, or with the regularity of footprint symbols in the Borgia Codex, as well as the relationship of these groups with the calendrical ratio of $146{:}104$ \textit{tlalcuahuitl}\footnote{A \textit{tlalcuahuitl} was a unit of length used by the Mexica, equivalent to 2.5 meters \parencite{jorgejorgewilliams2011}. Thus 365 meters and 260 meters correspond to 146 and 104 \textit{tlalcuahuitl}, respectively.} reflected in the architectural designs of Teotihuac\'{a}n and Monte Alb\'{a}n \parencite{peelerwinter2011}.

Finally, understanding the algebraic structure of the ritual calendar through the lens of contemporary mathematics enables us to reflect on the complexity and systematicity of the measurement and conception of time in pre-Columbian Mesoamerica, and in turn to reflect on the strengthening of our identity, the reappropriation, and the revaluation of the knowledge traditions of the diverse cultures that constitute our heritage.

\newpage
\section*{Appendices}

\begin{center}
\includegraphics[width=0.92\textwidth]{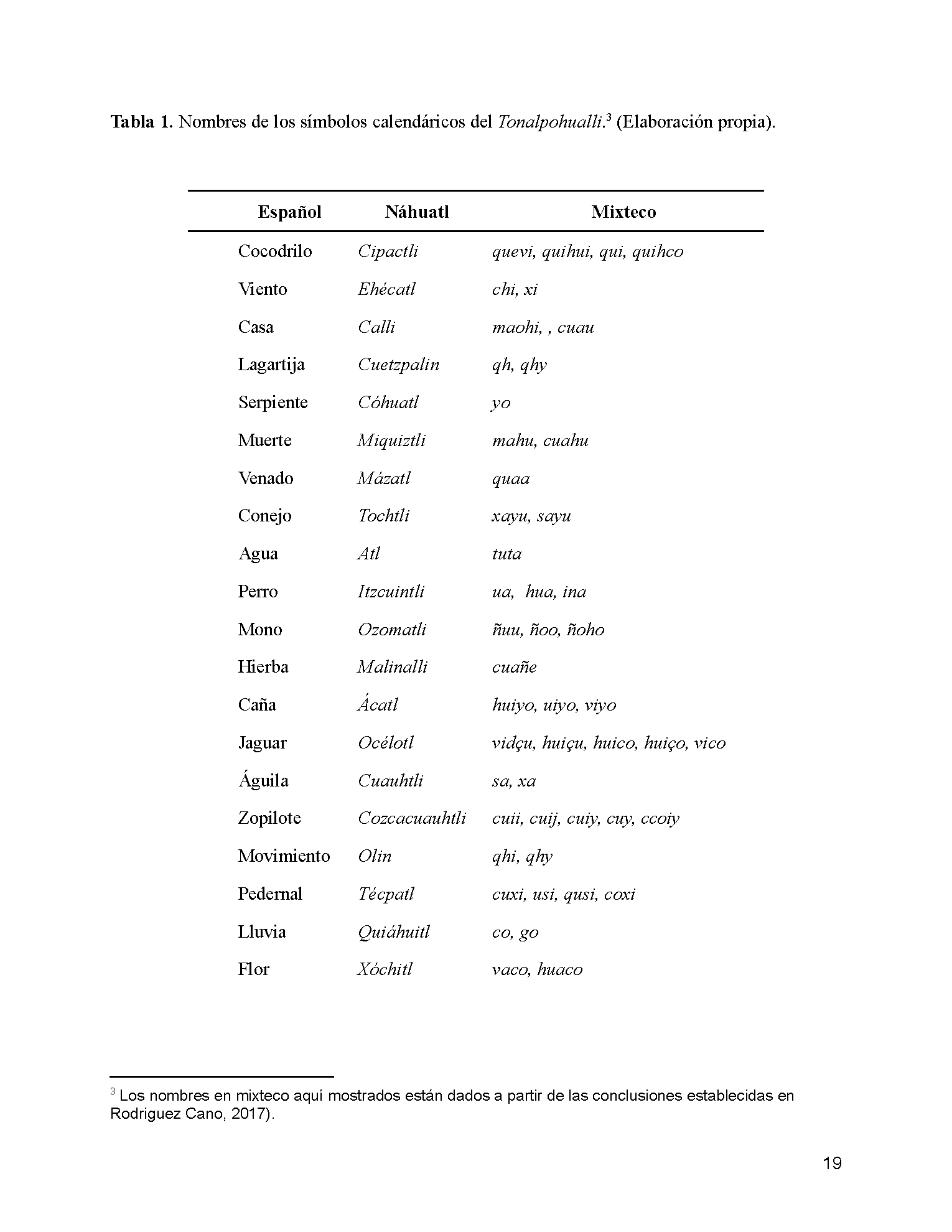}
\end{center}

\begin{figure}[p]
\centering
\includegraphics[height=0.94\textheight,keepaspectratio]{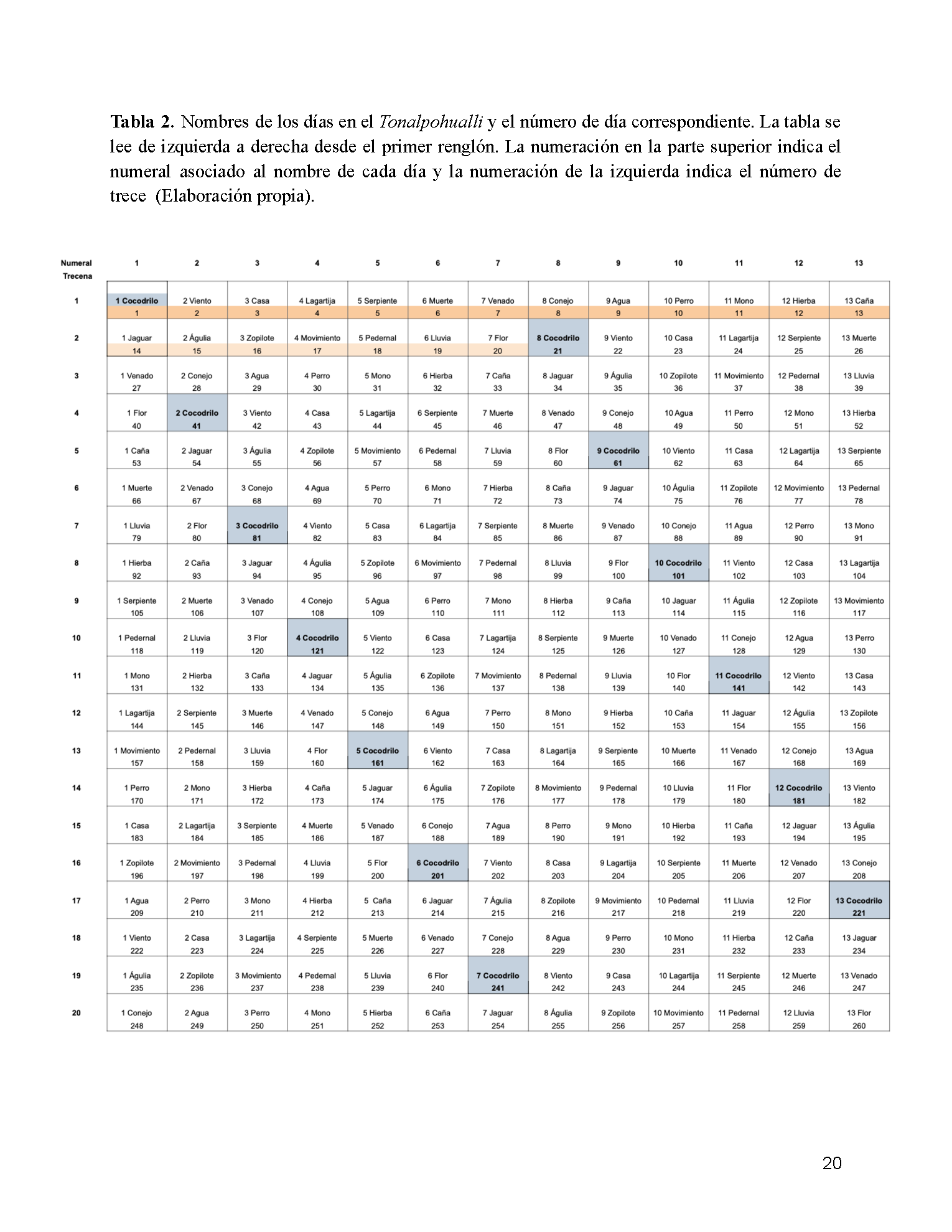}
\end{figure}

\begin{figure}[p]
\centering
\includegraphics[width=0.92\textwidth]{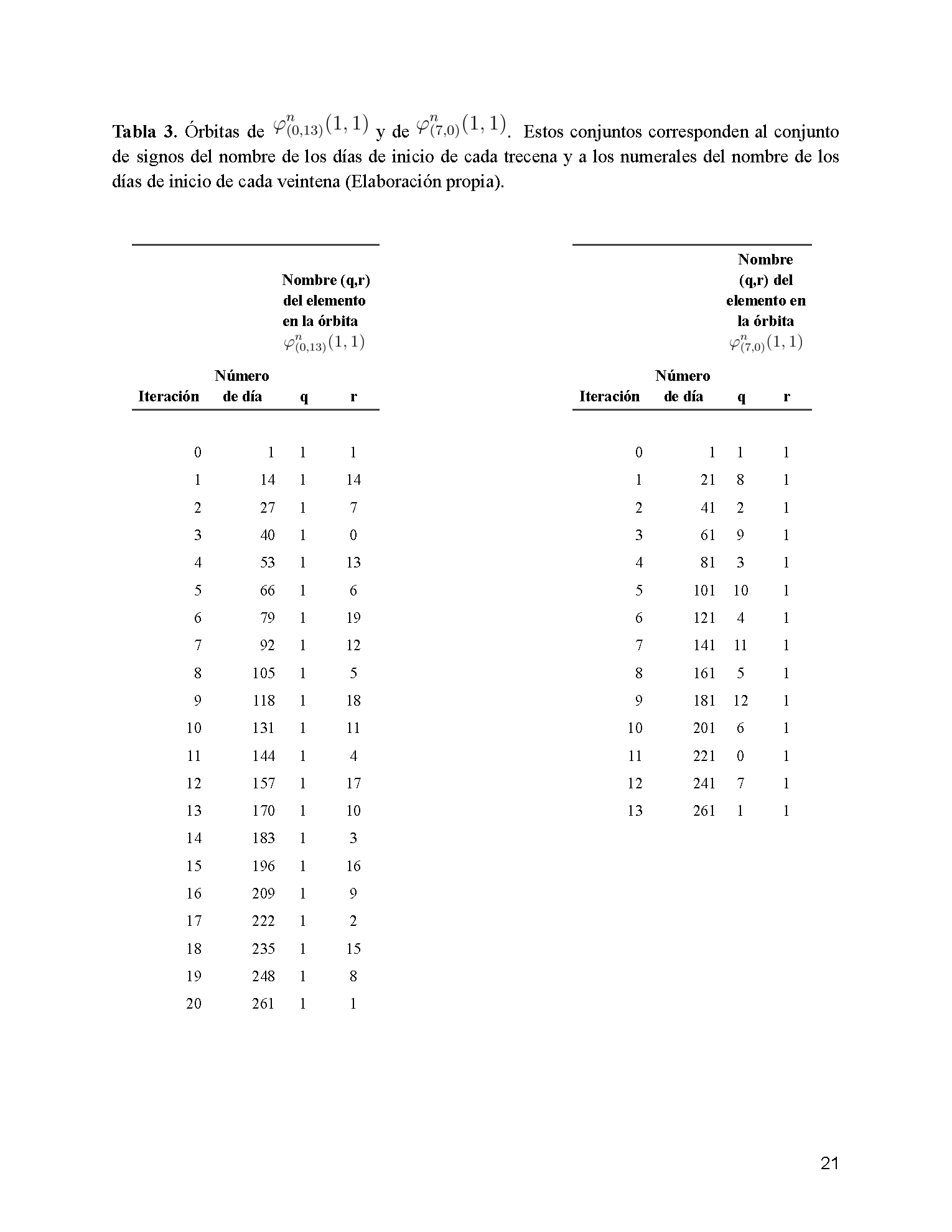}
\end{figure}

\begin{figure}[p]
\centering
\includegraphics[width=0.92\textwidth]{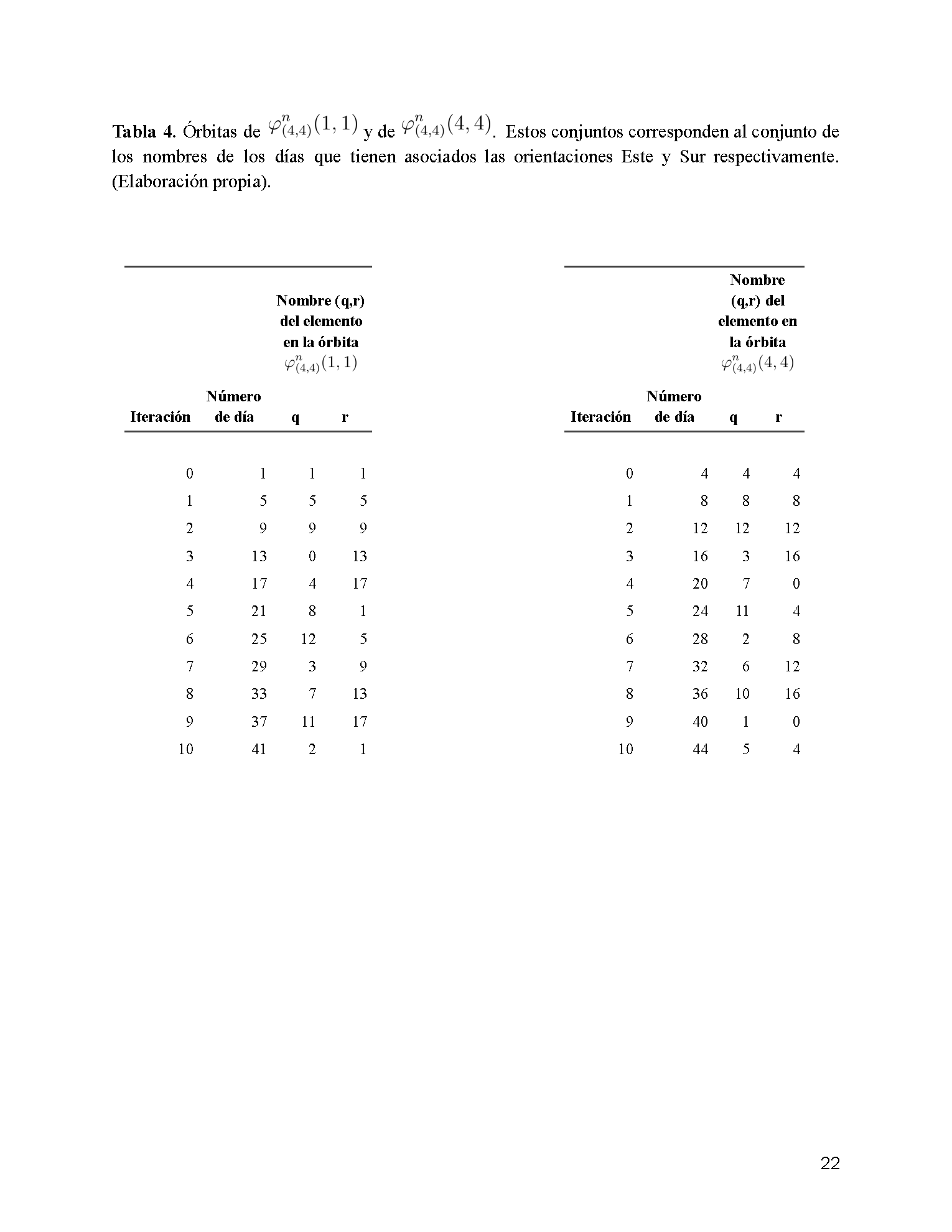}
\end{figure}

\begin{figure}[p]
\centering
\includegraphics[width=0.92\textwidth]{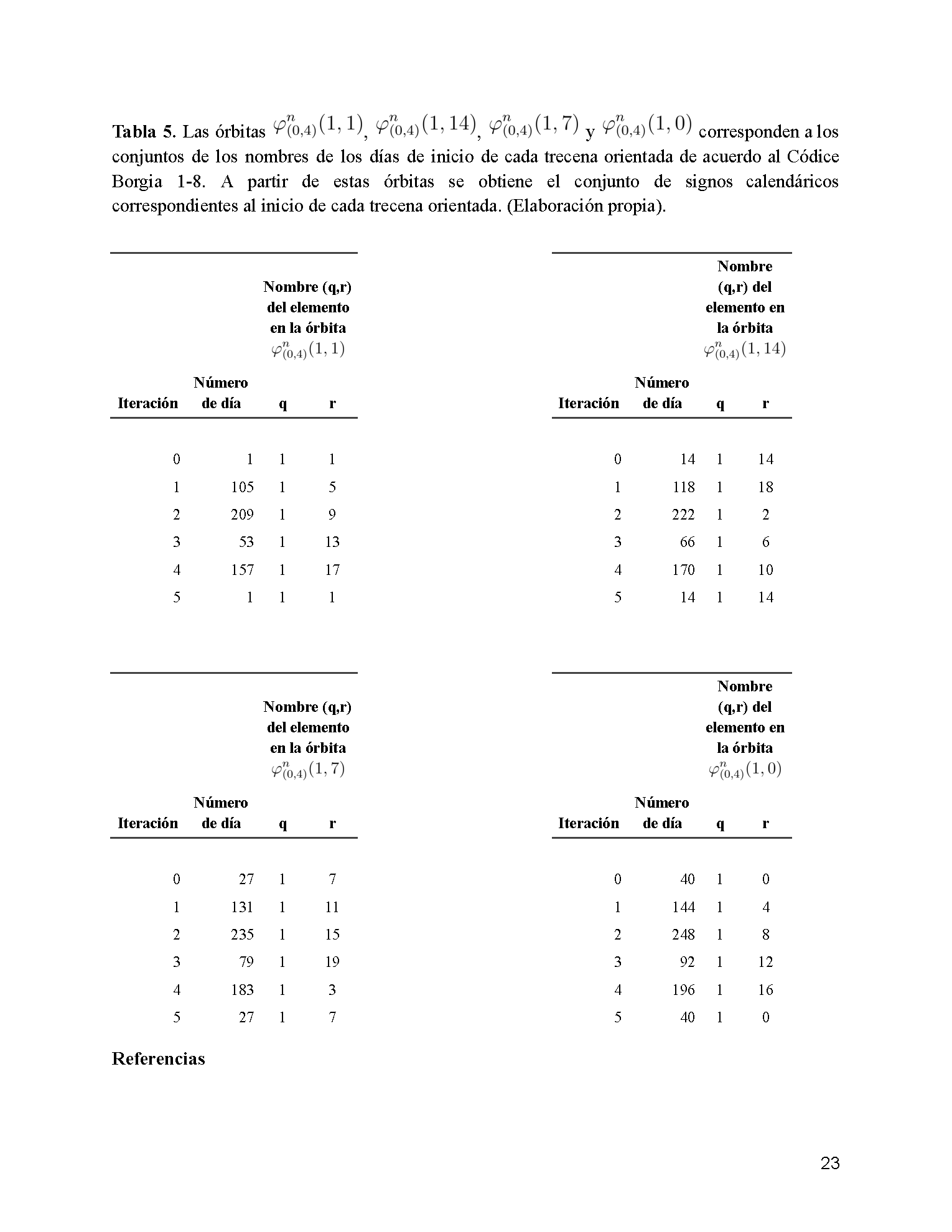}
\end{figure}

\clearpage
\printbibliography[title={References}]

\end{document}